\theoremstyle{plain}
\newtheorem{Conjecture}{Conjecture}
\newtheorem{Theorem}{Theorem}
\newtheorem{Lemma}{Lemma}
\newtheorem{Corollary}{Corollary}
\newcommand{\N}{\mathbb{N}}
\newcommand{\F}{\mathscr{F}}
\newcommand{\G}{\mathscr{G}}
\title{Minimum density of union-closed families}
\author{Igor Balla}
\date{\today}
\begin{document}

\maketitle

\begin{abstract}
Let $\F$ be a finite union-closed family of sets whose largest set contains $n$ elements. In \cite{Wojcik92}, W\'{o}jcik defined the density of $\F$ to be the ratio of the average set size of $\F$ to $n$ and conjectured that the minimum density over all union-closed families whose largest set contains $n$ elements is $(1 + o(1))\log_2{n}/(2n)$ as $n \rightarrow \infty$. We use a result of Reimer \cite{Reimer03} to show that the density of $\F$ is always at least $\log_2{n}/(2n)$, verifying W\'{o}jcik's conjecture. As a corollary we show that for $n \geq 16$, some element must appear in at least $\sqrt{(\log_2{n})/n}(|\F|/2)$ sets of $\F$.
\end{abstract}

\section{Preliminaries and Notations}
Given a family of sets $\F$, we say $\F$ is union-closed if for all $A, B \in \F$, $A \cup B \in \F$. In what follows, a union-closed family will always be taken to mean a finite union-closed family of finite sets. Let $\N = \{1, 2, 3, \ldots\}$ be the set of natural numbers. We denote the cardinality of a finite set $A$ by $|A| := \sum_{x \in A}{1}$ and denote the union of a family of sets $\F$ by $\bigcup{\F} := \bigcup_{A \in \F}{A}$. Given sets $A,B$, the set difference $A \setminus B := \{x \in A | x \notin B \}$.

Let $\F$ be a union-closed family and let $n = |\bigcup{\F}|$. To avoid trivial cases, in this paper we only consider $\F$ with $n \geq 1$. Define $\F_a = \{S \in \F | a \in S\}$ for all $a \in \bigcup{\F}$. We define the density of $\F$ by 
\[ D(\F) := \frac{1}{n |\F|}\sum_{A \in \F}{|A|} = \frac{1}{n |\F|}\sum_{a \in \bigcup{\F}}{|\F_a|}\]
and denote the minimum density over all union-closed families $\F$ with $|\bigcup{\F}| = n$ by $s_n := \min{\{D(\F) | \F \text{ union-closed, } |\bigcup{\F}| = n\}}$ for all $n \in \N$.

\section{Some Previous Conjectures and Results}
Interest in studying the structure of union-closed families first arose because of the following conjecture due to Frankl in 1979.

\begin{Conjecture}[Frankl]
For all union-closed families $\F$, there exists an $a \in \bigcup{\F}$ such that $|\F_a| \geq |\F|/2$.
\end{Conjecture}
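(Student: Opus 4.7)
\medskip
\noindent\textbf{Proof proposal.} I should admit at the outset that Frankl's conjecture is among the most notorious open problems in extremal combinatorics, so what follows is not a genuine proof but an honest description of the approach I would take and the point at which it breaks down.

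The first move in my plan is to lower bound the density $D(\F)$ and extract a heavy element by averaging: since $nD(\F) = \sum_{a \in \bigcup \F} |\F_a|/|\F|$, any bound $D(\F) \geq c$ yields some $a$ with $|\F_a| \geq c|\F|$. On its own this cannot suffice, because the main theorem of the present paper will show that $D(\F)$ can be as small as $(1+o(1))\log_2 n/(2n)$, so averaging alone delivers only a vanishing fraction rather than the constant $1/2$ that Frankl demands. Combining the density bound with a structural observation about where the light elements can sit should, however, yield the weaker corollary $|\F_a| \geq \sqrt{(\log_2 n)/n}\,|\F|/2$ advertised in the abstract.

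For the full conjecture my plan is to exploit union-closure through an entropy comparison. Sample $A,B$ independently and uniformly from $\F$; since $A \cup B \in \F$ we have $H(A \cup B) \leq \log_2 |\F|$, while subadditivity over coordinates gives $H(A \cup B) \leq \sum_a h\!\left(1 - (1-p_a)^2\right)$ with $p_a := |\F_a|/|\F|$ and $h$ the binary entropy. If every $p_a$ were uniformly small, one could pit this estimate against $H(A) \leq \log_2 |\F|$ to derive a contradiction, forcing some element to be dense. I would then try to sharpen the comparison with biased sampling, or by conditioning on a cleverly chosen event, aiming to push the resulting constant all the way up to $1/2$.

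The main obstacle, and the reason Frankl's conjecture remains open, is that the pointwise entropy inequality driving this comparison is tight at $p = (3-\sqrt{5})/2 \approx 0.382$ rather than at $p = 1/2$, so an unmodified entropy argument cannot cross the threshold $1/2$. Closing the gap seems to require either a multi-variable strengthening, for instance an inequality relating $H(A \cup B \cup C)$ to the individual $p_a$, or a genuinely new combinatorial input orthogonal to entropy; this is the step where I expect to be stuck, and it is precisely where every existing partial result stops short.
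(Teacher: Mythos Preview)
There is no proof in the paper for you to be compared against: the statement is listed as \emph{Conjecture~1} and the authors say explicitly that ``we still seem to be far from being able to prove Conjecture~1.'' Your opening disclaimer that what follows is not a genuine proof is therefore exactly right, and in that sense your proposal matches the paper perfectly---neither you nor the author claims to settle Frankl's conjecture.

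Your first paragraph, the averaging-from-density idea, is precisely the mechanism the paper deploys for its weaker result (Corollary~2): combine Theorem~1 with the lower bound $s_k \ge \log_2 k/(2k)$ of Theorem~3, then play the bound $|\F_a|\ge (k/n)|\F|$ coming from a minimum non-empty set of size $k$ against $|\F_a|\ge s_k|\F|$ to extract $|\F_a|\ge \tfrac{1}{2}\sqrt{(\log_2 n)/n}\,|\F|$. You have correctly identified both the method and its ceiling: since $s_n<1/2$ for $n\ge 3$, density averaging alone cannot reach Frankl's constant, a limitation the paper itself notes in its final paragraph.

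Your second paragraph, the entropy comparison via $H(A\cup B)$ and the pointwise inequality with tightness at $p=(3-\sqrt{5})/2$, is not in this paper at all. That line of attack belongs to a later development (Gilmer's argument and its refinements), and the present paper predates it; the tools here are purely combinatorial---Reimer's average-set-size bound and the structural Lemmas~1 and~2 about duplicated element profiles when $|\F|<|\bigcup\F|$. So while your entropy sketch is a reasonable description of the current state of the art on Frankl, it is orthogonal to anything this particular paper does.
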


Although much research has been done on union-closed families, we still seem to be far from being able to prove Conjecture 1. Roberts \cite{Roberts92} showed that Conjecture 1 holds when $|\F| \leq 40$ and Bo\v{s}njak and Markovi\'{c} \cite{Markovic08} showed that it holds when $|\bigcup{\F}| \leq 11$. Additionally, Sarvate and Renaud, Poonen \cite{Sarvate89, Poonen92} and others have shown that Conjecture 1 holds if there exists an $S \in \F$ with $|S| = 1,2$. In order to generalize this idea, W\'{o}jcik \cite{Wojcik92} defined the notions of density and minimum density as stated above and proved the following theorem.

\begin{Theorem}[W\'{o}jcik \cite{Wojcik92}]
Let $\F$ be a union-closed family and let $S \in \F$ with $|S| = k$. Then
\[ \sum_{a \in S}{|\F_a|} \geq k s_k |\F|.\]
\end{Theorem}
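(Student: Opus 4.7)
The plan is to reduce the inequality to a direct application of the definition of $s_k$ to a suitable union-closed family on the ground set $S$, which has $k$ elements. The naive attempt is to use the trace family $\G := \{A \cap S : A \in \F\}$: this is union-closed with $\bigcup \G = S$, so $\sum_{B \in \G}|B| \geq k s_k |\G|$. But this bound only counts each trace once, whereas the desired sum $\sum_{A \in \F}|A \cap S|$ weights each trace $B$ by the multiplicity $|\{A \in \F : A \cap S = B\}|$, so a direct application falls short. The fix is to partition $\F$ before projecting, and apply the $s_k$ bound on each cell.

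Concretely, for each $A^* \in \F$ with $A^* \supseteq S$, I set $\F_{A^*} := \{A \in \F : A \cup S = A^*\}$. Since $\F$ is union-closed and contains $S$, the map $A \mapsto A \cup S$ sends $\F$ into $\{A^* \in \F : A^* \supseteq S\}$, so $\{\F_{A^*}\}$ is a partition of $\F$. On each cell every $A$ satisfies $A \setminus S = (A \cup S) \setminus S = A^* \setminus S$, so $A = (A \cap S) \cup (A^* \setminus S)$ is determined by its trace $A \cap S$; hence $A \mapsto A \cap S$ is a bijection between $\F_{A^*}$ and $\G_{A^*} := \{A \cap S : A \in \F_{A^*}\}$.

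The last step is to verify that each $\G_{A^*}$ is a union-closed family on $S$ with $\bigcup \G_{A^*} = S$. Union-closedness is immediate: if $A_1, A_2 \in \F_{A^*}$, then $(A_1 \cup A_2) \cup S = A^*$, so $A_1 \cup A_2 \in \F_{A^*}$ and $(A_1 \cap S) \cup (A_2 \cap S) = (A_1 \cup A_2) \cap S \in \G_{A^*}$; and $S \in \G_{A^*}$ because $A^* \in \F_{A^*}$ with $A^* \cap S = S$. Applying the definition of $s_k$ gives $\sum_{C \in \G_{A^*}}|C| \geq k s_k |\G_{A^*}|$, which via the bijection becomes $\sum_{A \in \F_{A^*}}|A \cap S| \geq k s_k |\F_{A^*}|$. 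Summing over cells $A^*$ yields $\sum_{A \in \F}|A \cap S| = \sum_{a \in S}|\F_a| \geq k s_k |\F|$. The main obstacle is conceptual: the projection $A \mapsto A \cap S$ loses the multiplicity information needed to recover a bound involving $|\F|$, but on each fiber of $A \mapsto A \cup S$ the projection is already a bijection, so no information is lost; once this is noticed, the remaining verifications are routine.
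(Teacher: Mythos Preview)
The paper does not supply its own proof of this theorem; it is stated with attribution to W\'{o}jcik and used as a black box, so there is no argument in the paper to compare against. Your argument is correct. Partitioning $\F$ by the fibers of $A \mapsto A \cup S$ (which lands in $\F$ because $S \in \F$ and $\F$ is union-closed) makes the trace map $A \mapsto A \cap S$ injective on each fiber, since $A \setminus S = (A \cup S) \setminus S = A^{*} \setminus S$ is constant there; each trace family $\G_{A^{*}}$ is union-closed with $\bigcup \G_{A^{*}} = S$ because $A^{*}$ itself lies in $\F_{A^{*}}$ and $A^{*} \cap S = S$, so the defining inequality $D(\G_{A^{*}}) \geq s_k$ applies on each cell and summing over cells yields the claim. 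One cosmetic remark: your notation $\F_{A^{*}}$ for the fibers collides with the paper's $\F_a := \{B \in \F : a \in B\}$; choosing a different symbol for the fibers would avoid ambiguity.
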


One can easily verify that $s_1, s_2 = 1/2$, so Theorem 1 shows that Conjecture 1 holds if there exists $S \in \F$ with $|S| = 1,2$. Unfortunately, $s_3 = 4/9$ and W\'{o}jcik showed in \cite{Wojcik92} that for any $\epsilon > 0, k \in \N$, one can find a union-closed family $\F$ with $S \in \F$, $|S| = k$ such that $|\F_a| < (s_k + \epsilon)|\F|$ for all $a \in S$. Thus having a 3-set in a union-closed family $\F$ is not enough to guarantee that one of its elements satisfies Conjecture 1. In any case, it is an interesting question to be able to determine $s_k$ in general. W\'{o}jcik conjectured that

\begin{Conjecture}[W\'{o}jcik \cite{Wojcik92}]
For all $n \in \N$, a union-closed family $\F$ with $|\bigcup{\F}| = n$ which attains the minimum density $s_n$ is of the form $\F := \{A | A \subseteq \{1, 2, \ldots, k\} \} \cup \{ \{1, 2, \ldots, n\} \}$ where $k = \lfloor\log_2{n}\rfloor$ or $\lceil \log_2{n}\rceil$.
\end{Conjecture}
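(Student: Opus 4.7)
The plan is to compute the density of the conjectured extremizer, then prove a matching sharp lower bound, and finally analyze uniqueness. For the upper bound, writing $\F^* := \{A : A \subseteq [k]\} \cup \{[n]\}$ with $k < n$, direct computation gives $|\F^*| = 2^k + 1$ and $\sum_{A \in \F^*} |A| = k \cdot 2^{k-1} + n$, so
\[ D(\F^*) = \frac{k \cdot 2^{k-1} + n}{n(2^k + 1)}. \]
A discrete monotonicity check (comparing $D(\F^*)$ at $k$ versus $k \pm 1$) should establish that this quantity is minimized at $k \in \{\lfloor \log_2 n \rfloor, \lceil \log_2 n \rceil\}$, consistent with the value $s_3 = 4/9$ quoted in the paper.

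For the matching lower bound on an arbitrary union-closed $\F$ with $|\bigcup \F| = n$, I would isolate the contribution of $[n] \in \F$ from the rest and apply Reimer's theorem to the remainder. In the favorable case when $\F \setminus \{[n]\}$ is itself union-closed (which is what happens for $\F^*$), Reimer yields
\[ \sum_{A \in \F} |A| \geq n + \tfrac{1}{2}(|\F|-1)\log_2(|\F|-1), \]
and optimizing this bound over $|\F| = m$ at $m = 2^k + 1$ recovers $D(\F^*)$ exactly. The difficult case is when $\F$ contains proper subsets $A, B \neq [n]$ with $A \cup B = [n]$; for such $\F$ one would need to replace $\F \setminus \{[n]\}$ by a union-closed substructure of comparable size (for instance $\F_a$ for a suitably chosen element $a$, which is automatically union-closed), and argue that the forced presence of such proper subsets cannot decrease the total $\sum |A|$ below the value achieved by $\F^*$.

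Uniqueness would then follow by analyzing the equality case in Reimer's inequality, which should force $\F \setminus \{[n]\}$ to be (isomorphic to) a power set on some $k$-element subset. A perturbation argument (replacing a small set in the power-set core by an intermediate set of size between $k$ and $n$, and verifying that the density strictly increases) should then rule out all other configurations. The main obstacle is the lower-bound step: strengthening Reimer's inequality to control families in which $[n]$ is obtained as a union of proper subsets. The paper's use of Reimer suffices for the asymptotic $\log_2 n/(2n)$ bound but does not appear, on its own, strong enough to obtain either the exact value $D(\F^*)$ or the rigid structural conclusion in the conjecture.
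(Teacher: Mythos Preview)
The statement you are attempting to prove is \emph{Conjecture 2}, and the paper does not prove it. The paper explicitly says, after the proof of Theorem~3, that ``Work is currently being done along these lines to verify Conjecture~2.'' What the paper does establish is only the asymptotic consequence $s_n = (1+o(1))\frac{\log_2 n}{2n}$ (Corollary~1), via the lower bound $s_n \geq \frac{\log_2 n}{2n}$ (Theorem~3) and the upper bound coming from the density computation for the conjectured extremizer. So there is no ``paper's own proof'' to compare against.

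Your proposal is honest about this: you correctly compute $D(\F^*)$, and you correctly identify that Reimer's theorem applied to $\F \setminus \{[n]\}$ would recover the exact value \emph{provided} that family is itself union-closed. You then flag the case where $[n]$ arises as a union of proper members of $\F$ as the ``main obstacle,'' and you are right that this is where the argument breaks down. Replacing $\F \setminus \{[n]\}$ by some $\F_a$ does not obviously help: $\F_a$ is union-closed, but $[n] \in \F_a$, so you have not removed the top element, and there is no control on $|\F_a|$ relative to $|\F|$. Likewise, the uniqueness step via equality in Reimer's inequality presupposes that you have already forced $\F \setminus \{[n]\}$ to be the relevant family, which is exactly what fails in the hard case. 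In short, your plan reproduces the upper bound the paper gives, matches the paper's asymptotic lower bound, but --- as you yourself note in the final sentence --- does not close the gap needed for the exact structural statement. That gap is genuine, and the paper does not close it either.
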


He verified this claim for $n \leq 10$, and noted that as a consequence we would have $s_n = (1 + o(1))\frac{\log_2{n}}{2n}$ as $n \rightarrow \infty$. W\'{o}jcik claimed, without providing the proof, that Conjecture 1 implies this asymptotic result. In Corollary 1, we verify this asympotic result, giving more evidence towards the truth of Conjectures 1 and 2. In order to do so, we use a theorem proven by Reimer in \cite{Reimer03} bounding the average set size of a union-closed family.

\begin{Theorem}[Reimer \cite{Reimer03}]
For all union-closed families $\F$,
\[ \frac{1}{|\F|}\sum_{A \in \F}{|A|} \geq \frac{1}{2}\log_2{|\F|}.\]
\end{Theorem}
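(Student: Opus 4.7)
My plan is to attack Reimer's theorem. The bound is tight on $\F = 2^{[k]}$ (average size $k/2 = \tfrac12\log_2|\F|$), so any argument must exploit union-closedness in a way that is sharp for Boolean lattices.

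A natural first attempt is induction on $n = |\bigcup \F|$. Fix $a \in \bigcup \F$ and split $\F = \F_a \sqcup (\F \setminus \F_a)$; both parts are union-closed, and $\F_a' := \{A \setminus \{a\} : A \in \F_a\}$ is union-closed on a smaller ground set with $|\F_a'| = |\F_a|$. Applying induction to $\F_a'$ and to $\F \setminus \F_a$ and summing, element $a$ contributes an extra $|\F_a|$ to the set-size sum. Writing $p := |\F_a|/|\F|$, closing the induction reduces to $H(p) \leq 2p$ for the binary entropy $H$, which fails for small $p$. So the naive split is insufficient, but it pinpoints the deficit as a binary-entropy term per split and suggests we need to use more than a single element at a time.

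To overcome this I would move to an entropy/encoding framework. Let $X$ be a uniform random element of $\F$, viewed as a binary vector in $\{0,1\}^n$; then $\log_2|\F|$ is the entropy of $X$ and $|\F|^{-1}\sum_A |A| = \sum_i \Pr[X_i = 1]$. The chain rule gives $\log_2|\F| = \sum_i H(X_i \mid X_{<i})$. The structural observation is that after conditioning on any prefix $X_{<i} = s$, the subfamily $\F^s := \{A \in \F : A \cap \{1,\ldots,i-1\} = s\}$ still carries union-closed substructure---in particular its members containing $i$ form a union-closed family---and one should leverage this to control each conditional entropy $H(X_i \mid X_{<i})$ by twice the corresponding conditional probability on average, circumventing the pointwise failure of $H(p) \leq 2p$.

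The decisive step, and the main obstacle, is the combinatorial input that replaces the pointwise entropy inequality by a bound sharp in the average. I would expect this to take the form of an injective encoding: for each $A \in \F$ select a subset $c(A) \subseteq A$---say, the `pivot' elements with respect to a fixed ordering of the ground set, where $i \in c(A)$ records whether $i$ is forced by $A \cap \{1,\ldots,i-1\}$ inside $\F^{A \cap \{1,\ldots,i-1\}}$---in such a way that a counting argument on the tuples $(A, c(A))$ gives a product-of-downsets inequality like $\prod_{A \in \F} |\F^A| \geq |\F|^{|\F|/2}$, from which the theorem follows via $|\F^A| \leq 2^{|A|}$ and logarithms. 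Designing $c(\cdot)$ from the union-closed structure so that this encoding is injective and tight on Boolean lattices is where I expect the real work to lie, and presumably it is the technical core of Reimer's original proof.
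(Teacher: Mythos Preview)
The paper does not prove this statement at all: Theorem~2 is quoted from Reimer~\cite{Reimer03} and used as a black box in the proof of Theorem~3. There is therefore no ``paper's own proof'' to compare your proposal against.

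As for the proposal itself, it is explicitly a plan rather than a proof, and you say so: the decisive combinatorial step---constructing the encoding $c(\cdot)$ and verifying the claimed product inequality---is left open, with the remark that this ``presumably\ldots is the technical core of Reimer's original proof.'' That is an accurate self-assessment. Your first paragraph correctly identifies why naive element-by-element induction fails (the pointwise inequality $H(p)\le 2p$ is false for small $p$), and the entropy reformulation in the second paragraph is a standard and sensible way to organise the bookkeeping. But the third paragraph is where the content would have to be, and it contains only a wish list: an injection $A\mapsto c(A)\subseteq A$ with enough structure to force $\prod_{A}|\F^{A}|\ge |\F|^{|\F|/2}$. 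You have not defined $\F^{A}$ precisely, nor said why the proposed ``pivot'' construction should be injective, nor why union-closedness enters. Reimer's actual argument does proceed via a structural injection (roughly: compressing $\F$ to a filter in the Boolean lattice while only decreasing set sizes, then using that a filter of size $m$ has average size at least $\tfrac12\log_2 m$), so your instincts about the shape of the proof are reasonable, but nothing here constitutes a proof or even a verifiable outline of one.
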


\section{Main Results}

The main goal of this paper to show that for all union-closed families $\F$ with $|\bigcup{\F}| = n$, $D(\F) \geq \log_2{n}/(2n)$. In the case where $|\F| \geq n$, Theorem 2 gives us the desired result. In the other case $|\F| < n$, we will need Lemma 2.

\begin{Lemma}
Let $\F$ be a union-closed family, let $A \in \F$ be a minimal non-empty set and let $\G := \F \setminus \{A\}$. If there exist $a,b \in \bigcup{\G}$ such that $\G_a = \G_b$ but $\F_a \neq \F_b$ then $a \in B$ for all non-empty $B \in \F$ or $b \in B$ for all non-empty $B \in \F$. 
\end{Lemma}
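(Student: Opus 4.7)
The plan is to exploit the fact that $\F$ and $\G$ differ by exactly one set, namely $A$, so the hypotheses $\G_a = \G_b$ and $\F_a \neq \F_b$ are extremely restrictive. For any $c \in \bigcup \G$ we have $\F_c = \G_c$ if $c \notin A$ and $\F_c = \G_c \cup \{A\}$ if $c \in A$. Hence, after possibly swapping $a$ and $b$, the hypothesis forces $a \in A$ and $b \notin A$.

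Once we know $b \notin A$, the set $A$ itself is a non-empty member of $\F$ not containing $b$, so the second alternative of the conclusion is already ruled out. The goal then collapses to a single statement: every non-empty $B \in \F$ contains $a$. The case $B = A$ is immediate, so we are left with the case $B \in \G$ non-empty.

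The key step is to argue by contradiction using the minimality of $A$. Suppose that some non-empty $B \in \G$ fails to contain $a$. Then $B \notin \G_a = \G_b$, so $b \notin B$ as well; combined with $b \notin A$, this gives $b \notin A \cup B$. Now $A \cup B \in \F$ by union-closedness, and the minimality of $A$ prevents $B$ from being a non-empty proper subset of $A$, so $B \not\subseteq A$ and therefore $A \cup B$ strictly contains $A$. In particular $A \cup B \neq A$, so $A \cup B$ lies in $\G$. Since $a \in A \subseteq A \cup B$, this puts $A \cup B$ into $\G_a$, hence into $\G_b$ by hypothesis, forcing $b \in A \cup B$, which contradicts the previous line.

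I expect the only delicate point to be making clean use of the minimality of $A$ — it is precisely what guarantees that $A \cup B$ is not equal to $A$ when $B \in \G$ is non-empty, and hence remains in $\G$ so that the identity $\G_a = \G_b$ can be applied. Everything else is bookkeeping from the definitions of $\F_a$ and $\G_a$.
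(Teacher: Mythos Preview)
Your proof is correct and follows essentially the same argument as the paper: after reducing (without loss of generality) to $a\in A$, $b\notin A$, you suppose some non-empty $B\in\G$ omits $a$, deduce $b\notin A\cup B$, use minimality of $A$ to get $A\cup B\neq A$ and hence $A\cup B\in\G$, and then obtain a contradiction from $a\in A\cup B$ via $\G_a=\G_b$. The only differences are cosmetic: you spell out why $\F_c$ and $\G_c$ differ precisely on $A$, and you note explicitly that the disjunct ``$b$ in every non-empty set'' is already excluded by $b\notin A$.
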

\begin{proof}
If both $a,b \in A$ or both $a,b \notin A$ then $\F_a = \F_b$. Otherwise without loss of generality we have $a \in A$ and $b \notin A$. Now suppose there exists a non-empty $B \in \G$ such that $a \notin B$. Then $B \notin \G_a = \G_b$, so $b \notin B$ and thus $b \notin A \cup B$. $A \cup B \neq A$ by minimality of $A$, and so $A \cup B \in \G$ since $\F$ is union-closed. But $a \in A \subseteq A \cup B$, contradicting $\G_a = \G_b$.
\end{proof}

\begin{Lemma}
For all union-closed family $\F$ with $|\bigcup{\F}| \geq 2$ and $|\F| < |\bigcup{\F}|$, there exist distinct $a,b \in \bigcup{\F}$ such that $\F_a = \F_b$.
\end{Lemma}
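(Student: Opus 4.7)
The plan is to define, for each non-universal element $a$ (one that fails to lie in every non-empty member of $\F$), a canonical witness set $M_a \in \F$ with $a \notin M_a$, and then to argue by pigeonhole that two such witnesses must coincide, which in turn will force $\F_a = \F_b$. Before doing this I would dispose of the ``universal'' case: letting $U := \{u \in \bigcup \F : u \in B \text{ for every non-empty } B \in \F\}$, if $|U| \geq 2$ then any two distinct elements of $U$ lie in exactly the same members of $\F$ (namely the non-empty ones), so the conclusion is immediate. Hence I may assume $|U| \leq 1$.

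For each non-universal $a$ I would set
\[ M_a := \bigcup \{B \in \F : a \notin B\}. \]
The collection on the right is non-empty precisely because $a$ is non-universal, so union-closedness of $\F$ delivers $M_a \in \F$, and of course $a \notin M_a$. The central observation is that $M_a = M_b$ forces $\F_a = \F_b$: if some $B \in \F$ separated $a$ from $b$, say $a \in B$ and $b \notin B$, then $B$ would be one of the sets whose union defines $M_b$, so $B \subseteq M_b = M_a$, which would give $a \in M_a$, contradicting $a \notin M_a$.

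It remains to produce two distinct non-universal elements with $M_a = M_b$. If no such pair exists then $a \mapsto M_a$ is injective on the $n - |U|$ non-universal elements, with image contained in $\F \setminus \{\bigcup \F\}$ (since $a \in \bigcup \F$ but $a \notin M_a$), so $n - |U| \leq |\F| - 1$. Combined with $|U| \leq 1$, this yields $n \leq |\F|$, contradicting the hypothesis $|\F| < n$. The only slightly delicate step is the well-definedness of $M_a$, which is immediate from union-closedness once the universal case has been separated out; the rest is a clean counting argument and does not require invoking Lemma~1 directly.
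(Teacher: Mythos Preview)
Your proof is correct and takes a genuinely different route from the paper's. The paper argues by induction on $|\F|$: it removes a minimal non-empty set $A$, applies the induction hypothesis to $\G = \F \setminus \{A\}$, and then uses Lemma~1 to handle the case where the pair found in $\G$ fails to work in $\F$ (in which case one of the two elements is universal, and a second application of the induction hypothesis to $\G' = \{B \setminus \{a\} : B \in \G\}$ finishes the job). Your argument is direct and non-inductive: for each non-universal $a$ you take $M_a = \bigcup\{B \in \F : a \notin B\} \in \F$, observe that $M_a \neq \bigcup \F$, and apply pigeonhole on the map $a \mapsto M_a$ from a set of size $n - |U|$ into $\F \setminus \{\bigcup \F\}$. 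The key implication $M_a = M_b \Rightarrow \F_a = \F_b$ is exactly the observation that $M_a$ is the unique maximal member of $\F$ avoiding $a$, so any set avoiding $b$ but containing $a$ would contradict $a \notin M_a$. Your approach is shorter, avoids induction entirely, and in fact makes Lemma~1 unnecessary for this paper; the paper's inductive approach, on the other hand, tracks more explicitly how the structure changes under removal of minimal sets, which is the viewpoint the author hints at exploiting further in the remark following Theorem~3.
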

\begin{proof}
We induct on $|\F|$. For $|\F| = 1$, $\F = \{ \bigcup{\F} \}$, so we can choose any $a,b \in \bigcup{\F}$ to satisfy $\F_a = \F_b$. Now let $\F$ be a union-closed family with $|\F| = m \geq 2$ and $|\bigcup{\F}| = n$ such that $m < n$. Let $A \in \F$ be a minimal non-empty set and let $\G := \F \setminus \{A\}$. If $A = \bigcup{\F}$ then we must have $\F = \{A, \emptyset\}$, so we can choose any $a,b \in A$ to satisfy $\F_a = \F_b$. Otherwise we have $|\bigcup{\G}| = |\bigcup{\F}| = n$. No 2 sets of $\G$ can have union $A$ by minimality of $A$, so $\G$ is union-closed because $\F$ is. Since $|\G| = m - 1$, we can apply the induction hypothesis to $\G$ to obtain distinct $a,b \in \bigcup{\G}$ such that $\G_a = \G_b$. If $\F_a = \F_b$ then we are done. Otherwise we can apply Lemma 1 and without loss of generality have $a \in B$ for all non-empty $B \in \F$. Now define $\G' := \{B \setminus \{a\} | B \in \G\}$, so that $|\G'| \leq |\G| = m - 1$ and $|\bigcup{\G'}| = |\bigcup{\G} \setminus \{a\}| = n - 1$. $\G'$ is union-closed because $\G$ is, so we can apply the induction hypothesis to $\G'$ to obtain distinct $c,d \in \bigcup{\G'}$ such that $\G'_c = \G'_d$. Thus $\G_c = \G_d$. If $\F_c = \F_d$ then we are done. Otherwise again applying Lemma 1, without loss of generality we have $c \in B$ for all non-empty $B \in \F$. So then $\F_a = \F_c$, and the claim holds by induction.
\end{proof}

Note that for any $n \in \N$, the family $\F := \{ \{1, 2, \ldots k\} | k \in \{1, 2, \ldots, n \}\}$ is union-closed with $|\F| = n = |\bigcup{\F}|$ such that $\F_a \neq \F_b$ for all distinct $a,b \in \bigcup{\F}$. Thus Lemma 2 is in some sense best possible.

\begin{Theorem}
For all $n \in \N$,
\[ s_n \geq \frac{\log_2{n}}{2n}.\]
\end{Theorem}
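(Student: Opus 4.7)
The plan is to split on whether $|\F| \geq n$ or $|\F| < n$. When $|\F| \geq n$, Reimer's theorem immediately gives $\frac{1}{|\F|}\sum_{A \in \F}|A| \geq \frac{1}{2}\log_2|\F| \geq \frac{1}{2}\log_2 n$, so dividing by $n$ yields the desired bound $D(\F) \geq \log_2(n)/(2n)$.

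When $|\F| < n$, I would reduce iteratively to the previous case using Lemma 2. Each application produces distinct $a, b \in \bigcup{\F}$ with $\F_a = \F_b$, meaning every set of $\F$ contains $a$ iff it contains $b$. The idea is to ``merge'' $a$ and $b$ by passing to $\F' := \{A \setminus \{b\} : A \in \F\}$. A short check shows that $\F'$ is union-closed with $|\bigcup{\F'}| = n - 1$, and that $|\F'| = |\F|$ (two distinct $A, C \in \F$ giving $A \setminus \{b\} = C \setminus \{b\}$ would differ only in $b$, contradicting $\F_a = \F_b$). Moreover $\sum_{A \in \F'}|A| = \sum_{A \in \F}|A| - |\F_b|$ with $|\F_b| \geq 1$. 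Iterating this reduction exactly $n - |\F|$ times produces a family $\F^*$ with $|\F^*| = |\bigcup{\F^*}| = |\F|$; at every intermediate stage the hypothesis $|\F^{(i)}| < |\bigcup{\F^{(i)}}|$ needed for Lemma 2 still holds, so the process never gets stuck.

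Applying Reimer to $\F^*$ gives $\sum_{A \in \F^*}|A| \geq \frac{|\F|}{2}\log_2|\F|$. Since each of the $n - |\F|$ reduction steps decreased the total set-size sum by at least $1$, summing the contributions yields
\[\frac{1}{|\F|}\sum_{A \in \F}|A| \geq \frac{1}{2}\log_2|\F| + \frac{n - |\F|}{|\F|}.\]
It then suffices to verify the elementary inequality $n/|\F| - 1 \geq \frac{1}{2}\log_2(n/|\F|)$, which follows from $\ln x \leq x - 1$ combined with $2 \ln 2 > 1$. This gives $\frac{1}{|\F|}\sum_{A \in \F}|A| \geq \frac{1}{2}\log_2 n$, and dividing by $n$ completes the proof.

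The main obstacle is arranging the reduction so that $|\F|$ stays constant while $|\bigcup{\F}|$ drops by exactly one per step, which is precisely what Lemma 2 (with the structural Lemma 1 used in its proof) provides; without it one would have to track a moving target on both sides of the Reimer bound. The remaining ingredients---Reimer's theorem on the reduced family, bookkeeping on the telescoping sum, and the final one-line log inequality---are routine.
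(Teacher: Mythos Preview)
Your proof is correct and uses the same ingredients as the paper: Reimer's theorem when $|\F| \geq n$, and Lemma~2 to merge twin elements otherwise. The paper packages this as an induction on $n$ (one application of Lemma~2 per step, with the per-step estimate $(1+1/n)^{|\F|} \leq e \leq 4$), whereas you iterate Lemma~2 all the way down to $|\bigcup\F^*| = |\F|$, apply Reimer once, and close with the aggregated inequality $x - 1 \geq \tfrac{1}{2}\log_2 x$; these are equivalent organizations of the same argument.
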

\begin{proof}
We proceed by induction. For $n=1$, trivially $s_1 \geq 0 = \log_2{n}/(2n)$. Now suppose $s_n \geq \log_2{n}/(2n)$ for some $n \in \N$ and let $\F$ be a union-closed family with $|\bigcup{\F}| = n + 1$. If $|\F| \geq n + 1$ then by Theorem 2,
\[ D(\F) = \frac{1}{(n + 1) |\F|}\sum_{A \in \F}{|A|} \geq \frac{\log_2{|\F|}}{2(n + 1)} \geq \frac{\log_2{(n+1)}}{2(n + 1)}. \]
Otherwise $|\F| \leq n$, so we can apply Lemma 2 to obtain distinct $a,b \in \bigcup{\F}$ such that $\F_a = \F_b$. Thus we can define $\G := \{A \setminus \{a\} | A \in \F \}$, so that $|\G| = |\F|$ and $\G$ is union-closed since $\F$ is. Now $|\bigcup{\G}| = |\bigcup{\F} \setminus \{a\}| = n$, so we can apply the induction hypothesis to obtain
\[ \sum_{x \in \bigcup{\F} \setminus \{a\}}{|\F_x|} = \sum_{x \in \bigcup{\G}}{|\G_x|} = n D(\G)|\G| \geq n s_n |\G| \geq \frac{1}{2}|\F|\log_2{n}. \]
Using the limit definition of $e$ we see that
\[ \left(\frac{n+1}{n}\right)^{|\F|} \leq \left(1 + \frac{1}{n}\right)^n \leq e \leq 4.\]
Thus 
\[ |\F|(\log_2{(n+1)} - \log_2{n}) = \log{\left ( \left(\frac{n+1}{n}\right)^{|\F|} \right )} \leq \log_2{4} = 2. \]
Finally observing that $|\F_a| \geq 1$, we obtain
\[ \sum_{x \in \bigcup{\F}}{|\F_x|} = |\F_a| + \sum_{x \in \bigcup{\F} \setminus \{a\}}{|\F_x|} \geq 1 + \frac{1}{2}|\F|\log_2{n} \geq \frac{1}{2}|\F| \log_2{(n+1)}.\]
Thus $D(\F) \geq \log_2{(n+1)}/(2(n+1))$ and the claim holds by induction.
\end{proof}

With repeated application of Lemma 2, it can be proven that either $\frac{1}{|\F|}\sum_{A \in \F}{|A|} \geq n(n-1)/2$ or there exist $\alpha, \beta \in \bigcup{\F}$ distinct, such that $\F_{\alpha} = \F_{\beta}$. This may be combined with Theorem 2, to yield a better lower bound on $s_n$ using the same techniques as in Theorem 3. Work is currently being done along these lines to verify Conjecture 2.

\begin{Corollary}
\[ s_n = (1 + o(1))\frac{\log_2{n}}{2n}. \]
as $n \rightarrow \infty$.
\end{Corollary}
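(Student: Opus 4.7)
The plan is to combine the lower bound from Theorem 3 with a matching upper bound obtained from the explicit family in Conjecture 2. Theorem 3 already yields $s_n \geq \log_2{n}/(2n)$, so the task reduces to exhibiting, for each $n$, a union-closed family $\F_n$ with $|\bigcup \F_n| = n$ and $D(\F_n) \leq (1 + o(1))\log_2{n}/(2n)$.

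First I would set $k := \lfloor \log_2{n} \rfloor$ and take
\[ \F_n := \{A : A \subseteq \{1, 2, \ldots, k\}\} \cup \{\{1, 2, \ldots, n\}\}. \]
This is union-closed, since the union of two subsets of $\{1,\ldots,k\}$ again lies in $\{1,\ldots,k\}$, and any union involving $\{1,\ldots,n\}$ equals $\{1,\ldots,n\}$. Clearly $\bigcup \F_n = \{1, \ldots, n\}$, so $|\bigcup \F_n| = n$. A direct count gives $|\F_n| = 2^k + 1$, and since the average size of a subset of $\{1,\ldots,k\}$ is $k/2$, the total $\sum_{A \in \F_n}|A|$ equals $k \cdot 2^{k-1} + n$.

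Next I would substitute these into the definition of density to obtain
\[ D(\F_n) = \frac{k \cdot 2^{k-1} + n}{n(2^k + 1)}. \]
Using $n/2 < 2^k \leq n$, which forces $k = \log_2{n} + O(1)$ and $2^k = \Theta(n)$, one has $n/(k \cdot 2^{k-1}) = O(1/k)$ and $1/2^k = O(1/n)$, so the right-hand side equals $(k/(2n))(1 + o(1)) = (1 + o(1))\log_2{n}/(2n)$. Combined with Theorem 3, this yields the claimed asymptotic.

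The main difficulty in the corollary is really the lower bound on $s_n$, which is already resolved by Theorem 3 via Reimer's inequality. Given that, what remains is merely a routine density computation on W\'{o}jcik's conjectured extremal family, with no essential obstacle beyond keeping track of the $O(1)$ error between $k$ and $\log_2{n}$.
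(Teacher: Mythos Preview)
Your proof is correct and follows essentially the same approach as the paper: both combine the lower bound from Theorem~3 with the explicit W\'{o}jcik family $\F_n = 2^{\{1,\ldots,k\}} \cup \{\{1,\ldots,n\}\}$ and compute $D(\F_n) = (k\cdot 2^{k-1}+n)/(n(2^k+1))$. The only cosmetic difference is that the paper takes $k = \lceil \log_2 n \rceil$ while you take $k = \lfloor \log_2 n \rfloor$, which makes no difference asymptotically.
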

\begin{proof}
Let $n \in \N$ and let $k := \lceil\log_2{n}\rceil$. Define $\F := \{A | A \subseteq \{1, \ldots, k\} \} \cup \{ \{1, 2, \ldots, n\} \}$. Observe that $\F$ is union-closed with $|\bigcup{\F}| = n$. Thus we have
\[ s_n \leq D(\F) = \frac{k 2^{k-1} + n}{n(2^k + 1)} = (1 + o(1))\frac{\log_2{n}}{2n}\]
as $n \rightarrow \infty$. On the other hand, $s_n \geq \log_2{n}/(2n)$ by Theorem 3.
\end{proof}

\begin{Corollary}
Let $n \in \N$ with $n \geq 16$ and let $\F$ be a union-closed family with $|\bigcup{\F}| = n$. Then there exists an $a \in \bigcup{\F}$ such that 
\[ |\F_a| \geq \frac{1}{2}\sqrt{\frac{\log_2{n}}{n}}|\F|. \]
\end{Corollary}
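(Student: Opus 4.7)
The plan is to combine Theorem 1 with a direct counting argument (and, when needed, the Lemma~2 + Reimer reduction from the proof of Theorem~3), splitting on the size $k$ of a minimum non-empty set $S \in \F$. Set $K := \frac{1}{2}\sqrt{n\log_2 n}$; the target bound $\frac{1}{2}\sqrt{\log_2 n/n}$ is precisely $K/n$.

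\textbf{Case 1 ($k \leq K$).} Theorem 1 applied to $S$ produces some $a \in S$ with $|\F_a| \geq s_k|\F|$. For $k \leq 2$ we have $s_k = 1/2$, which is more than enough since $\sqrt{\log_2 n/n} \leq 1$. For $k \geq 3$, Theorem 3 yields $s_k \geq \log_2 k/(2k)$, and since $\log_2 x/(2x)$ is decreasing for $x > e$ and $k \leq K$, we get $s_k \geq \log_2 K/(2K)$. A direct computation using $2\log_2 K = \log_2 n + \log_2\log_2 n - 2$ shows that $\log_2 K/K \geq \sqrt{\log_2 n/n}$ reduces to $\log_2 \log_2 n \geq 2$, i.e.\ $n \geq 16$---exactly the hypothesis. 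Thus in Case 1, $|\F_a| \geq (|\F|/2)\sqrt{\log_2 n/n}$.

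\textbf{Case 2 ($k > K$).} Every non-empty member of $\F$ has size exceeding $K$, so $\sum_{A \in \F}|A| > K \cdot |\F^+|$, where $\F^+$ denotes the non-empty sets in $\F$, and $|\F^+| \geq |\F| - 1$. Averaging over $[n]$ produces some $a$ with $|\F_a| > K(|\F|-1)/n$. To obtain the target $K|\F|/n$ I would absorb the factor $(|\F|-1)/|\F|$ in two sub-cases: when $|\F|$ is large (roughly $|\F| \geq K+1$), the integrality of $k$ and the strict inequality $k > K$ give $k(|\F|-1)/n \geq K|\F|/n$; when $|\F| \leq K$, I would instead iterate Lemma 2 to reduce $\F$ to $\F^*$ with $|\bigcup \F^*| \leq |\F|$ while preserving $\max_a |\F_a|/|\F|$, and then apply Reimer's theorem to $\F^*$ to obtain $\max_a |\F_a|/|\F| \geq \log_2|\F|/(2|\F|) \geq \log_2 K/(2K) \geq \frac{1}{2}\sqrt{\log_2 n/n}$, valid for $n \geq 16$.

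The \textbf{main obstacle} is reconciling the bounds at the boundary where $k$ and $|\F|$ are simultaneously close to $K$, especially when $\emptyset \in \F$ (which is the source of the $(|\F|-1)/|\F|$ factor in the counting bound). The critical computation $\log_2 \log_2 n \geq 2 \Leftrightarrow n \geq 16$ controls both sides: it drives the threshold $K$ in Case 1 and ensures the Reimer+Lemma 2 estimate closes the boundary in Case 2. The hypothesis $n \geq 16$ is accordingly tight.
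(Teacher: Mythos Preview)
Your overall strategy matches the paper's: obtain two lower bounds on $\max_a |\F_a|/|\F|$, one of order $k/n$ from counting and one of order $(\log_2 k)/(2k)$ from Theorem~1 combined with Theorem~3, and balance them at a threshold of order $\sqrt{n\log_2 n}$. Your Case~1 is correct, and your identification of the equivalence $\log_2 K/(2K)\geq \tfrac12\sqrt{\log_2 n/n}\iff n\ge 16$ is clean.

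The genuine gap is in Case~2, specifically the ``large $|\F|$'' sub-case. The claim that integrality of $k$ together with $k>K$ forces $k(|\F|-1)\ge K|\F|$ once $|\F|\gtrsim K+1$ is false: that inequality is equivalent to $|\F|\ge k/(k-K)$, and $k-K$ can be arbitrarily close to $0$ even though $k$ is an integer. For instance, with $n=10000$ one has $K=\tfrac12\sqrt{n\log_2 n}\approx 182.26$, so $k=183$ gives $k-K\approx 0.74$; taking $|\F|=220$ (well above $K$) one computes $k(|\F|-1)=183\cdot 219=40077<40097\approx K|\F|$, so your counting bound misses the target. Moreover your Lemma~2 + Reimer route also fails here, since $\log_2 220/(2\cdot 220)\approx 0.0177<0.0182\approx K/n$. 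Thus neither sub-case covers this regime, and the split does not close.

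The paper avoids the $(|\F|-1)/|\F|$ loss altogether by a simple device you are missing: since $\F$ is union-closed, $\bigcup\F\in\F$ and contributes $n$ to $\sum_{A\in\F}|A|$. Removing both $\emptyset$ and $\bigcup\F$ and then adding back the latter gives $\sum_{A\in\F}|A|\ge n+k(|\F|-2)$, hence $\max_a|\F_a|\ge 1+\tfrac{k}{n}(|\F|-2)$. For $k\le n/2$ this rearranges (using $n\ge 2k$) to $\max_a|\F_a|\ge (k/n)|\F|>(K/n)|\F|$ directly; for $k>n/2$ it gives $\max_a|\F_a|>|\F|/2$. With this in hand your Case~2 needs no sub-cases and no Lemma~2/Reimer detour at all.
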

\begin{proof}
Let $S \in \F$ be a non-empty set with minimum $|S|$ and let $k =|S|$. Define $\G := \F \setminus \{\emptyset, \bigcup{\F}\}$ so that $\sum_{A \in \G}{|A|} \geq k(|\F| - 2)$, and thus
\[ \max_{a \in \bigcup{\F}}{|\F_a|} \geq \frac{1}{n}\sum_{a \in \bigcup{\F}}{|\F_a|} = \frac{1}{n}\sum_{A \in \F}{|A|} = 1 + \frac{1}{n}\sum_{A \in \G}{|A|} \geq 1 + \frac{k}{n}(|\F|-2).\]
If $k > n/2$, then
\[ \max_{a \in \bigcup{\F}}{|\F_a|} \geq 1 + \frac{k}{n}(|\F| - 2) > \frac{|\F|}{2} \geq \frac{1}{2}\sqrt{\frac{\log_2{n}}{n}}|\F| \]
so we are done. Otherwise $k \leq n/2$ and so we obtain
\begin{equation}
\max_{a \in \bigcup{\F}}{|\F_a|} \geq 1 + \frac{k}{n}(|\F| - 2) \geq \frac{2k + (|\F| - 2)k}{n} = \frac{k}{n}|\F|.
\label{ineq1}
\end{equation}
Additionally, we can apply Theorem 1 to obtain
\[ \max_{a \in \bigcup{\F}}{|\F_a|} \geq \max_{a \in S}{|\F_a|} \geq \frac{1}{k}\sum_{a \in S}{|\F_a|} \geq s_k |\F|. \]
If $k \leq 2$, then we observe that $s_k = 1/2$ so we are done. Otherwise $k \geq 3$. By Theorem 3 we have
\begin{equation}
\max_{a \in \bigcup{\F}}{|\F_a|} \geq s_k |\F| \geq \frac{\log_2{k}}{2k}|\F|.
\label{ineq2}
\end{equation}
Consider the function $f:[4, \infty) \rightarrow [16, \infty)$ defined by $f(x) := 2x^2/\log_2{x}$ for all $x \in [4, \infty)$. Since $f$ increasing and $f(4) = 16$, we can define the inverse function $g:[16, \infty) \rightarrow [4, \infty)$ of $f$. We have
\[ \frac{g(n)}{n} = \frac{g(n)}{f(g(n))} = \frac{\log_2{g(n)}}{2 g(n)}. \]
Thus $g(n) = \sqrt{n \log_2{\sqrt{g(n)}}}$ and since $g(n) \geq 4$,
\begin{align*}
\frac{g(n)}{n} &= \frac{\sqrt{n \log_2{\sqrt{g(n)}}}}{n} = \sqrt{\frac{\log_2{g(n)}}{2n}} = \sqrt{\frac{\log_2{\sqrt{n \log_2{\sqrt{g(n)}}}}}{2n}}\\
							 &= \sqrt{\frac{\log_2{n} + \log_2{(\log_2{\sqrt{g(n)}})}}{4n}} \geq \frac{1}{2}\sqrt{\frac{\log_2{n}}{n}}.
\end{align*}
Now suppose for sake of contradiction that $\max_{a \in \bigcup{\F}}{|\F_a|} < (g(n)/n)|\F|$. Then by (\ref{ineq1}), $g(n) > k$. But by (\ref{ineq2}) we also have 
\[ \frac{\log_2{g(n)}}{2g(n)} = \frac{g(n)}{n} > \max_{a \in \bigcup{\F}}{|\F_a|} \geq \frac{\log_2{k}}{2k}. \]
Since $\log_2{x}/(2x)$ is decreasing for $x > e$ and $g(n) \geq 4 > e$, $k \geq 3 > e$, it follows that $g(n) < k$, a contradiction. Thus
\[ \max_{a \in \bigcup{\F}}{|\F_a|} \geq \frac{g(n)}{n}|\F| \geq \frac{1}{2}\sqrt{\frac{\log_2{n}}{n}}|\F|. \qedhere\]
\end{proof}

Corollary 2 clearly does not give a sharp lower bound and with some work the technique used in the proof can be extended to prove a slightly better bound. In this paper we decided against spoiling the neatness of the result by doing so. In any case, the above technique is not powerful enough to prove Conjecture 1, since the minimum density $s_n < 1/2$ for $n \geq 3$.

\bibliography{UCS}

\end{document}